\documentclass[12pt,reqno]{amsart}
\usepackage{amsmath}
\usepackage{amssymb}
\usepackage{amsthm}
\usepackage{tikz-cd}
\usepackage{enumitem}
\usepackage[hidelinks]{hyperref}

\newtheorem*{theorem*}{Theorem}
\newtheorem{theorem}{Theorem}[section]
\newtheorem{proposition}[theorem]{Proposition}
\newtheorem{lemma}[theorem]{Lemma}
\newtheorem{conjecture}[theorem]{Conjecture}
\newtheorem{corollary}[theorem]{Corollary}
\theoremstyle{definition}
\newtheorem{notation}[theorem]{Notation}
\newtheorem{definition}[theorem]{Definition}
\newtheorem{remark}[theorem]{Remark}
\newtheorem{example}[theorem]{Example}
\newtheorem{question}[theorem]{Question}

\newcommand{\mc}{\mathcal}
\newcommand{\mf}{\mathbf}
\newcommand{\mb}{\mathbb}
\newcommand{\vep}{\varepsilon}

\newcommand{\on}{\operatorname}

\usepackage[margin=0.75in]{geometry}
\begin{document}
\title{The Seshadri Constants of Tangent Sheaves on Toric Varieties}
\author{Chih-Wei Chang}
\address{Department of Mathematics, National Taiwan University, Taipei, Taiwan}
\email{cwchang0219@ntu.edu.tw}
\subjclass[2020]{14M25, 14C17}
\keywords{Toric varieties, Seshadri constants, tangent sheaves, minimal model program}
\begin{abstract}
In this paper, we investigate the Seshadri constant $\vep(X,T_X;p)$ of the tangent sheaf $T_X$ of a proper $\mathbb Q$-factorial toric variety $X$. We show that $\vep(X,T_X;1)>0$ if and only if the following statement holds true: 
if $a_1v_1+\cdots +a_kv_k=0$ where each $a_i$ is a positive real number and each $v_i$ is the primitive generator of some ray in the fan $\Delta$ that defines $X$, then $k\geq \dim X+1$. Based on the result, we show that a smooth projective toric variety $X$ with $\vep(X,T_X;p)>0$ for some $p\in X$ is isomorphic to the projective space, confirming a special case of the conjecture proposed by M. Fulger and T. Murayama.
\end{abstract}
\maketitle
Throughout the paper, all varieties are irreducible and defined over an algebraically closed field $\mf k$ of any characteristic. Unless stated otherwise, points refer to closed points.
The classical Seshadri constant of a nef invertible sheaf $\mc L$ on a projective variety $X$ at a closed point $p$, introduced by Demailly \cite{MR1178721},  is defined to be
\begin{align*}
\vep(\mc L,p)&=\sup\{t\in\mb R_{\geq 0}\mid\pi^*c_1(\mc L)-tE\ \text{is nef},\ \pi\colon\on{Bl}_p(X)\rightarrow X,\ E=\on{exc}(\pi)\}\\
             &=\inf
\{\frac{c_1(\mc L)\cdot C}{\on{mult}_p(C)}\mid C\ \text{is an irreducible curve passing through}\ p\}\in \mb R_{\geq 0}.
\end{align*}
The Seshadri constants of nef $\mb Q$-Cartier $\mb Q$-divisors are defined in the same way.
These constants measure the local positivity of $\mc L$ and have a close relation with the separation of jets of $\mc L$ and of the adjoint bundle $\omega_X\otimes_{\mc O_X}\mc L$. More precisely, the
Seshadri criterion (Theorem 1.4.12, Lazarsfeld 2004\cite{MR2095471}) asserts that a Cartier divisor $D$ on a projective variety $X$ is ample if and only if $\inf_{p\in X}\{\vep(D,p)\}>0$, and,  if $X$ is smooth projective, $\omega_X\otimes_{\mc O_X}\mc L$ separates $\ell$-jets at $p$ if $\vep(\mc L,p)>\dim X+\ell$ \cite{MR1178721}.  See \cite[Chapter 5]{MR2095471} or \cite{https://doi.org/10.48550/arxiv.0810.0728} for more detailed introductions. 
Despite the fact that the classical Seshadri constants are fundamental and richly structured, there are few studies on the possible generalizations. The most notable works in this direction, among others, are Beltrametti–Schneider–Sommese \cite{MR1248115,MR1360498} and Hacon \cite{MR1779893} on the Seshadri constants of ample vector bundles.
Several years ago, Fulger and Murayama defined the Seshadri constants of not necessarily nef coherent sheaves on projective varieties in \cite{FM21}.
To state the definition, we recall that
for any coherent sheaf $\mc G$ on a smooth projective curve $C$, the minimal slope of $\mc G$ is defined to be
\[
\mu_{\min}(\mc G):=\min\{\mu(\mc H)\mid \mc H\ \text{is a quotient of}\ \mc G,\,\mu(\mc H):=\frac{\deg{\mc H}}{\on{rk}\,\mc H},\ \deg(\mc H):=\chi(\mc H)-\on{rk}\,\mc H\cdot\chi(\mc O_C)\}
\]
with the convention that $\mu(\mc H)=\infty$ when $\mc H$ is torsion.
Following \cite{FM21}, we define
\[
\overline{\mu}_{\min}(\mc G):=\left\{\begin{array}{ll}
\mu_{\min}(\mc G) &,\, \text{if}\ \on{char}(\mf k)=0\\
\lim\limits_{n\rightarrow\infty}\frac{\mu_{\min}((F^n)^*\mc G)}{p^n}&,\, \text{if}\ \on{char}(\mf k)=p
\end{array}\right.,
\]
where $F\colon C\rightarrow C$ is the absolute Frobenius morphism.
The Seshadri constant of a coherent sheaf $\mc F$ on a projective variety $X$ at a closed point $p\in X$ is defined to be (\cite[Section 3]{FM21})
  \begin{multline*}\label{def}\tag{1.1}
  \vep(X,\mc F;p):=\inf_{p\in C\subseteq X}\{\frac{\overline{\mu}_{\min}(\nu^*\mc F)}{\mathrm{mult}_p(C)}\mid C\ \text{is an irreducible curve,}\\ \nu\colon\widetilde C\rightarrow C\ \text{is the normalization}\}\in\mathbb R\cup\{\pm\infty\}.
  \end{multline*} 
We usually write $\vep(\mc F,p)$ if there is no confusion. These constants agree with the classical Seshadri constants when $\mc F$ is a nef invertible sheaf, and it has been shown in \cite{FM21} that they behave similarly to their classical counterparts:  
\begin{itemize}
\item $\inf\{\vep(\mc F,p)\mid p\in X\}>0$ if and only if $\mc F$ is ample (\cite[Theorem 3.11]{FM21}).
\item Suppose $X$ is smooth of dimension $n$ and $\mc F$ is an ample locally free sheaf of rank $r$. If $\vep(\mc F,p)>\frac{n+s}{\ell+r}$, then $\omega_X\otimes S^\ell\mc F\otimes\det\mc F$ 
separates $s$-jets at $p$ (\cite[Proposition 5.7]{FM21}). 
\end{itemize}

Besides being useful in studying the local positivity of coherent sheaves, Seshadri constants also play a role in characterizing the projective spaces.
Bauer and Szemberg showed in \cite[Theorem 1.7]{MR2533311} that a complex Fano manifold is isomorphic to the projective space if the Seshadri constant of $-K_X$ is larger than $\dim X$ at some point. Later, Liu and Zhuang generalized the theorem to complex $\mb Q$-Fano varieties \cite[Theorem 2]{MR3803780}. Fulger and Murayama asked a question in higher rank similar to these observations:
\begin{conjecture}[{\cite[Conjecture 4.9]{FM21}}]\label{conj}
Let $X$ be a smooth projective variety defined over an algebraically closed field. If there exists $p\in X$
such that $\vep(T_X, p)>0$, then $X\simeq \mb P^n$.
\end{conjecture}
In other words, it is expected that the Seshadri constants of the tangent bundles cannot strictly lie between $0$ and $1$.  
The conjecture has been verified when $X$ is a smooth projective surface or a Fano manifold \cite[Proposition 4.8 and Corollary 4.12]{FM21}.
The purpose of this paper is to study Conjecture~\ref{conj} when $X$ is toric. Our main results are as follows:

\begin{theorem}\label{thm1}
Suppose that $X$ is a smooth projective toric variety. If $\vep(T_X,p)>0$ for some $p\in X$, then $X\simeq \mb P^n$.
\end{theorem}

\begin{corollary}[= Proposition~\ref{prop3}]\label{cor0}
Suppose $X$ is a smooth projective toric variety.  Then 
   \[
   \vep(T_X,p)=\min_{\rho\in\Delta(1)}\{\vep(D_\rho,p)\}.
   \]
\end{corollary}
The theorem is false for singular varieties, even for projective threefolds with terminal singularities:
\begin{example}[=Example~\ref{Example:terminal_threefold}]
   There exists a toric projective threefold $X$ with terminal singularities such that $\varepsilon(T_X,1)>0$ but $X\not\simeq \mb P^n$.
\end{example} 
This suggests that, in the singular setting, one might need to replace $T_X$ by something else (for example, $T_X(-D)$ for some suitably defined effective torus invariant $\mb Q$-divisor $D$).  

If $X$ is a projective $\mb Q$-factorial toric variety which is possibly singular, we still have $\vep(T_X,p)\geq\min_{\rho\in\Delta(1)}\{\vep(D_\rho,p)\}$ (see Lemma~\ref{lemma1} and the proof of Proposition~\ref{prop2}). The author is unaware of any projective $\mb Q$-factorial toric variety $X$ that violates Corollary~\ref{cor0}. It is worth mentioning that an explicit formula for the Seshadri constants of nef toric vector bundles at the torus invariant points is given in \cite[Proposition 3.2]{HMP}. On the other hand, when $p$ is not a torus invariant point, it is not easy to calculate the Seshadri constants even for torus invariant divisors.
For example, if $H$ is the ample generator of the Picard group of $X=\mb P(a,b,c)$ and $1\in X$ is the identity of its dense torus, then $\vep(H,1)=1/\sqrt{abc}$ is equivalent to Nagata's conjecture for $abc$ points (\cite[Proposition 5.2]{MR2784746}).

The proof of Theorem~\ref{thm1} is divided into two parts: $p\in T$ or $p\in X\setminus T$. If $p\in X\setminus T$, we first show that the dual of the conormal short exact sequence of a $T$-invariant prime divisor $Y$ inside the smooth projective toric variety $X$ splits, and then apply an induction argument. To deal with the case $p\in T$ (may assume $p=1$), we establish a combinatorial criterion for when $\vep(X,T_X;1)>0$ (Theorem~\ref{combinatoric}).
Combining with \cite[Proposition 3.2]{MR1133869} on the existence of certain primitive collections, we show that the only smooth projective toric variety with $\vep(X,T_X;1)>0$ is the projective space. For this part, if $\on{char}\,\mf k=0$, we note that one can simply apply \cite[Corollary 0.4(11)]{MR1929792} to get the same result (see \cite[Proposition 4.8(2)]{FM21} for details).
The advantage of our argument is that it does not depend on the characteristic of $\bf k$. 

\section{A combinatorial criterion}
The main result of this section is Theorem~\ref{combinatoric}. 
Instead of repeating \cite[Section 3]{FM21}, we will only recall some of the lemmas that are necessary for our calculations.
\begin{lemma}\label{lemma1}
Let $X$ be a projective variety.
\begin{enumerate}[label={\upshape(\arabic*)}]
\item \label{lemma1-1} ({\cite[Lemma 3.31]{FM21}})
If
  \[
  \mathcal G\rightarrow\mathcal F\rightarrow\mathcal H\rightarrow 0
  \]
  is an exact sequence of coherent sheaves on $X$, then for all $p\in X$,
  \[
  \vep(\mathcal F,p)\geq\min\{\vep(\mathcal G,p),\vep(\mathcal H,p)\}.
  \]
  Note that it is true even when $\infty$ occurs.
  If $\mathcal F=\mathcal G\oplus\mathcal H$, then "$=$" holds.

\item \label{lemma1-2}
If $\mathcal V\rightarrow\mathcal Q$ is a surjective morphism of coherent sheaves on $X$, then $\varepsilon(\mathcal Q,p)\geq \varepsilon(\mathcal V,p)$ for all $p\in X$. More generally, if $\mc V|_U\rightarrow\mc Q|_U$ is surjective for some Zariski open subset $U$ of $X$ that contains $p$, then $\varepsilon(\mathcal Q,p)\geq \varepsilon(\mathcal V,p)$.

\item \label{lemma1-3} Suppose $f\colon Y\rightarrow X$ is a birational morphism such that $f|_{f^{-1}(U)}\colon f^{-1}(U)\rightarrow U$ is an isomorphism for some Zariski open subset $U$ containing $p$. Then $\vep(X,\mc F;p)=\vep(Y,f^*\mc F;p')$ for any coherent sheaf $\mc F$ on $X$. Here $p'$ is the preimage of $p$ under $f|_{f^{-1}(U)}$.
\end{enumerate}
\end{lemma}
\begin{proof}

For (2), consider the exact sequence
\[
    \mc V\overset{\alpha}{\rightarrow} \mc Q\rightarrow \on{coker}(\alpha)\rightarrow 0.
\]
Note that the support of $\on{coker}(\alpha)$ is contained in $X\setminus U$ and apply $(1)$.

For (3), let $C$ be an irreducible curve on $X$ passing through $p$, and let $C'$ be its strict transform on $Y$. Note that if $\nu\colon\widetilde{C}\rightarrow C'$ is the normalization, then so is the composition $\widetilde{C}\overset{\nu}{\rightarrow}C'\overset{f}{\rightarrow} C$ and
\[
\frac{\overline{\mu}_{\min}((f\circ\nu)^*\mc F)}{\on{mult}_p(C)}=\frac{\overline{\mu}_{\min}(\nu^*(f^*\mc F))}{\on{mult}_{p'}(C')}.
\]
Hence $\vep(X,\mc F;p)\geq\vep(Y,f^*\mc F;p')$. Since every irreducible curve $C'$ in $Y$ passing through $p'$ arises in this way, we actually have $\vep(X,\mc F;p)=\vep(Y,f^*\mc F;p')$.
\end{proof}

\begin{remark}
Lemma~\ref{lemma1} only depends on the validity of the formula 
\[
\vep(X,\mc F;p)=\inf_{p\in C\subseteq X}\{\frac{\overline{\mu}_{\min}(\nu^*\mc F)}{\mathrm{mult}_p(C)}\mid C\ \text{is an irreducible curve,}\\ \nu\colon\widetilde C\rightarrow C\ \text{is the normalization}\}.
\]
Hence, it is still true for proper varieties if we adopt the following definition of Seshadri constants.
\end{remark}

\begin{definition}[Seshadri constants for proper varieties]\label{Def:Seshadri_Constant_Proper}
  The Seshadri constant of a coherent sheaf $\mc F$ on a proper variety $X$ at a closed point $p\in X$ is defined to be
  \begin{align*}
  \vep(X,\mc F;p)&:=\inf_{p\in C\subseteq X}\{\frac{\overline{\mu}_{\min}(\nu^*\mc F)}{\mathrm{mult}_p(C)}\mid C\ \text{is an irreducible curve, }\nu\colon\widetilde C\rightarrow C\ \text{is the normalization}\}\\
    &\in\mathbb R\cup\{\pm\infty\}.
  \end{align*}
\end{definition}

We refer to \cite{MR1234037} and \cite{MR2810322} for basic knowledge of toric geometry. 
\begin{notation}\label{notaion}
\begin{enumerate}[label={\upshape(\arabic*)}]
\item
$N\simeq\mb Z^n$ will be a lattice and $M:=\on{Hom}(N,\mb Z)$ denotes the dual lattice.
The convex cone in $N_\mb R:= N\otimes\mb R$ generated by $\{v_1,\cdots,v_k\}\subseteq N$ is denoted by $\langle v_1,\cdots,v_k\rangle$. 
A strongly convex rational polyhedral cone $\sigma\subseteq N_\mb R$ is a cone of the form
$\langle v_1,\cdots,v_k\rangle$ for some $\{v_1,\cdots,v_k\}\subseteq N$ such that $\sigma$ does not contain any non-trivial linear subspace of $N_\mb R$.
  \item  A fan $\Delta$ in $N_\mb R$ is a set of strongly convex rational polyhedral cones in $N_\mb R$ such that (i) for all $\sigma\in\Delta$, each face of $\sigma$ is also in $\Delta$, and (ii) if $\sigma,\sigma'\in\Delta$, then $\sigma\cap\sigma'$ is a face of each. 
It gives rise to a toric variety $X(\Delta)$. 

\item  $\Delta(k):=\{\sigma\in\Delta\mid\dim\sigma=k\}$. If $\rho\in \Delta(1)$, denote the primitive generator of $\rho$ by $v_\rho\in N$ and write $G(\Delta):=\{v_\rho\mid\rho\in\Delta(1)\}$. 
\item  The dense torus $N\otimes\mf k^*$ is denoted by $T$. Let $\sigma\in\Delta$.
\begin{itemize}
\item $O_\sigma$ is the $T$-orbit corresponding to $\sigma$. Write $V_\sigma:=\overline{O_\sigma}=\bigcup_{\sigma\prec\tau}O_\tau$. In the case $\rho\in\Delta(1)$, we write $D_\rho$ instead of $V_\rho$.
\item $U_\sigma:=\on{maxSpec}\,\mf k[\chi^m\mid m\in\sigma^\vee\cap M]=\bigcup_{\tau\prec\sigma}O_\tau$.
\end{itemize} 
\item We say that $\rho,\rho'\in\Delta(1)$ are adjacent if they are contained in the same maximal cone in $\Delta$.
\item \label{essentially_surjective}A morphism $f\colon\mc F\rightarrow\mc G$ of $\mc O_X$-modules on a toric variety $X=X(\Delta)$ is called essentially surjective if $f|_T\colon\mc F|_T\rightarrow\mc G|_T$ is surjective.
\item For any $\mc O_X$-module $\mc F$, we write $\widehat{\mc F}:=\mc F^{\vee\vee}$. 

\end{enumerate}
\end{notation}

\begin{proposition}\label{prop2}
Let $X=X(\Delta)$ be a proper $\mb Q$-factorial toric variety. Then $\vep(T_X,1)\geq 0$.
\end{proposition}
\begin{proof}
By \cite[Theorem 8.1.6]{MR2810322}, we have the short exact sequence  
    \begin{equation}\label{*}\tag{2.1}
      0\rightarrow\widehat{\Omega}_X^1\rightarrow\bigoplus_{\rho\in\Delta(1)}\mathcal O_X(-D_\rho)\rightarrow \on{Cl}(X)\otimes_{\mathbb Z}\mathcal O_X\rightarrow 0.
    \end{equation}
    The dual of (\ref{*}) gives
    \begin{equation}\label{**}\tag{2.2}
      \bigoplus_{\rho\in\Delta(1)}\mathcal O_X(D_\rho)\rightarrow T_X.
    \end{equation}
     Let $\sigma\in\Delta(n)$ be a maximal cone and consider the sequence (\ref{*}) for $U_\sigma$:
     \[
       0\rightarrow\widehat{\Omega}_{U_\sigma}^1\rightarrow\bigoplus_{\rho\prec\sigma}\mathcal O_{U_\sigma}(-D_\rho|_{U_\sigma})\rightarrow \on{Cl}(U_\sigma)\otimes_{\mathbb Z}\mathcal O_{U_\sigma}\rightarrow 0.
     \]
     Using the facts that
     \begin{itemize}
         \item for any integral domain $R$, $\on{Ext}^1_{R}(R/mR,R)=0$ for any $m\in\mb Z_{\geq 0}$, and
         \item $\on{Cl}(U_\sigma)$ is torsion,
     \end{itemize}
     we have
     $
       T_{U_\sigma}\simeq \bigoplus_{\rho\prec\sigma}\mathcal O_{U_\sigma}(D_\rho|_{U_\sigma}).
     $
     Now the restriction of (\ref{**}) to $U_\sigma$ is given by the projection
     \[
       \bigoplus_{\rho\in\Delta(1)}\mathcal O_X(D_\rho|_{U_\sigma})\rightarrow \bigoplus_{\rho\in\Delta(1),\,\rho\prec\sigma}\mathcal O_X(D_\rho|_{U_\sigma})\simeq T_{U_\sigma},
     \]
     which is surjective, and thus so is (\ref{**}) as $\{U_\sigma\mid\sigma\in\Delta(n)\}$ covers $X$. The proposition now follows from Lemma~\hyperref[lemma1-1]{\ref*{lemma1}\ref*{lemma1-1}}, \hyperref[lemma1-2]{\ref*{lemma1-2}}, and the fact that $D_\rho\cdot C\geq 0$ for any $\rho\in\Delta(1)$ and any irreducible curve $C$ passing through $1$.   
\end{proof}

\begin{definition}[the condition $(\dagger)$]\label{dagger}
Let $\Delta$ be a complete simplicial fan in $N$ of rank $n$. The condition $(\dagger)$ is defined as follows:
 if $a_1v_1+\cdots +a_kv_k=0$
  for some $\{v_1,\cdots, v_k\}\subseteq G(\Delta)$ and $a_i\in\mb R_{> 0}$, then $k\geq n+1$. See Proposition~\hyperref[mmp1]{\ref*{mmp}\ref*{mmp1}} for the geometric meaning.
\end{definition}

\begin{proposition}\label{a}
$\vep(T_{X},1)=0$ if \hyperref[dagger]{$(\dagger)$} is not satisfied.
\end{proposition}
\begin{proof}
We may assume that
\[
n_1v_1+\cdots+n_{r}v_{r}=0,\ n_i\in\mb N\ \text{for all}\ i\ \text{and}\ r\leq n.
\]
Then $\{v_1,\cdots,v_r\}$ spans a subspace $V\subseteq N_\mb R$ of dimension $\ell<n$. 
Let $\Delta_0$ be the non-complete fan in $(N\cap V)_\mb R$ consisting of $0,\rho_1,\cdots,\rho_r$, where $\rho_i$ is the ray generated by $v_i$. 
We have the following maps of fans
\[
  \Delta_0\,(\text{as a fan in }(N\cap V)_\mb R) \to \Delta_0\,(\text{as a fan in }N_\mb R) \to \Delta, 
\]
where the first map is given by the natural map $N\cap V\to N$. Thus, we have
\begin{equation}\label{3}\tag{2.3}
Y(\Delta_0)\hookrightarrow Y(\Delta_0)\times (\mf k^*)^{n-\ell}\hookrightarrow X(\Delta).
\end{equation}

The following construction is taken from the proof of \cite[Proposition 2]{Pay2}.
Let $\phi_i\colon\mf k^*\rightarrow (N\cap V)\otimes \mf k^*$ be the one-parameter subgroup corresponding to $v_i$ for $1\leq i\leq r$. Consider the following map:
\[
\phi\colon\mf k^*\setminus\{\lambda_1,\ldots,\lambda_r\}\rightarrow (N\cap V)\otimes \mf k^*,\ \phi(t)=\prod_{i=1}^r\phi_i(t-\lambda_i)^{n_i},
\]
where $\lambda_1,\ldots,\lambda_r$ are distinct elements in $\mf k^*$.
Consider $\mf k^*$ as the subset $\{[a,1]\mid a\in\mf k^*\}\subseteq \mb P^1$.
Then $\phi$ can be completed into a morphism $\bar{\phi}\colon\mb P^1\rightarrow X(\Delta)$ via (\ref{3}) such that
\begin{itemize}
\item  $\bar{\phi}([1,0])=1$,
\item $\bar{\phi}([a,1])=\phi(a)\in Y(\Delta_0)$ for $a\in\mf k\setminus\{\lambda_1,\cdots,\lambda_r\}$, and
\item $\bar{\phi}([\lambda_k,1])=\left(\prod_{1\leq i\leq r,\, i\neq k}\phi_i(\lambda_k-\lambda_i)^{n_i}\right)\cdot x_{\rho_k}\in O_{\rho_k}\subseteq Y(\Delta_0)$ where $x_{\rho_k}$ is the distinguished point corresponding to $\rho_k$.
\end{itemize}
Now let $C:=\bar{\phi}(\mb P^1)$.
Then $C$ lies in the fiber of $1\in (\mf k^*)^{n-\ell}$ of $Y(\Delta_0)\times (\mf k^*)^{n-\ell}\rightarrow (\mf k^*)^{n-\ell}$, which is a trivial family of smooth varieties. We thus have 
$
\nu^* T_X\simeq \mc O_{\mb P^1}^{n-\ell}\oplus E  
$
where $\nu\colon\mb P^1\rightarrow C$ is the normalization and $E$ is some vector bundle on $\mb P^1$.
Hence $\vep(T_X,1)\leq 0$ by Lemma~\hyperref[lemma1-1]{\ref*{lemma1}\ref*{lemma1-1}} and Definition~\ref{Def:Seshadri_Constant_Proper}. Finally, by Proposition~\ref{prop2} we must have $\vep(T_X,1)=0$. 
\end{proof}

If $\pi\colon Y\rightarrow X$ is the blow-up along a smooth subvariety $Z\subseteq X\setminus\on{Sing}(X)$ and $E\subseteq Y$ is the exceptional divisor, there is a naturally defined morphism $\pi^* T_{X}(-E)\rightarrow T_{Y}$ \cite[4.10.2]{FM21}. 
The following is the toric analog, which can be used to estimate the Seshadri constants of $T_Y$ in terms of those of $T_X$ and $E$.

\begin{lemma}\label{lemma2}
Suppose 
\[
\begin{tikzcd}
Y\arrow[r,"\pi"]=Y(\widetilde{\Delta})  &	X=X(\Delta)  		
\end{tikzcd}
\] 
is a birational toric morphism between proper $\mb Q$-factorial toric varieties such that $Y$ is smooth. Let $E$ be an effective $T$-invariant divisor on $Y$ such that $\on{exc}(\pi)\subseteq\on{Supp}(E)$. Then there is a naturally defined morphism $\pi^*T_X(-mE)\rightarrow T_{Y}$ for all sufficiently large $m\in\mb Z_{>0}$. In particular, it is essentially surjective in the sense of Notation~\hyperref[essentially_surjective]{\ref*{notaion}\ref*{essentially_surjective}}.
\end{lemma}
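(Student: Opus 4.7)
The plan is to extend the canonical isomorphism $\pi^*T_X|_T\simeq T_Y|_T$ (which exists because $\pi$ restricts to the identity on the dense torus $T$) to a global morphism $\pi^*T_X(-mE)\to T_Y$, using the surjective Euler maps
\[
\sigma_X:\bigoplus_{\rho\in\Delta(1)}\mc O_X(D_\rho)\twoheadrightarrow T_X,\qquad \sigma_Y:\bigoplus_{\tilde\rho\in\tilde\Delta(1)}\mc O_Y(D_{\tilde\rho})\twoheadrightarrow T_Y
\]
furnished by Proposition \ref{prop2}.

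First I would unpack the fan-theoretic data. Since $\pi$ is birational, $\tilde\Delta$ refines $\Delta$, so every ray of $\Delta$ survives as a ray of $\tilde\Delta$, giving $\Delta(1)\subseteq\tilde\Delta(1)$; the $\pi$-exceptional divisors are precisely $\{D_{\tilde\rho}\mid\tilde\rho\in\tilde\Delta(1)\setminus\Delta(1)\}$. Writing $E=\sum_{\tilde\rho}b_{\tilde\rho}D_{\tilde\rho}$, the hypothesis $\mr{exc}(\pi)\subseteq\mr{Supp}(E)$ forces $b_{\tilde\rho}>0$ for each exceptional $\tilde\rho$. For $\rho\in\Delta(1)$, the $\Delta$-piecewise linear function $\psi_\rho$ characterized by $\psi_\rho(v_{\rho'})=\delta_{\rho,\rho'}$ yields
\[
\pi^*D_\rho=D_\rho+\sum_{\tilde\rho\in\tilde\Delta(1)\setminus\Delta(1)}\psi_\rho(v_{\tilde\rho})D_{\tilde\rho},
\]
with all $\psi_\rho(v_{\tilde\rho})\geq 0$. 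I would then choose $m\in\mb N$ sufficiently divisible that $mE$ and every $m\pi^*D_\rho$ are honest integer Cartier divisors (using $\mb Q$-factoriality of $Y$) and that $mb_{\tilde\rho}\geq\psi_\rho(v_{\tilde\rho})$ for every $\rho\in\Delta(1)$ and exceptional $\tilde\rho$; the second batch of inequalities is finite and satisfiable because each $b_{\tilde\rho}>0$. Then the componentwise inequality $\pi^*D_\rho-mE\leq D_\rho$ holds.

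This inequality provides canonical inclusions $\mc O_Y(\pi^*D_\rho-mE)\hookrightarrow\mc O_Y(D_\rho)\hookrightarrow\bigoplus_{\tilde\rho\in\tilde\Delta(1)}\mc O_Y(D_{\tilde\rho})$ for each $\rho$, which I would compose with $\sigma_Y$ to obtain
\[
\Psi:\bigoplus_{\rho\in\Delta(1)}\mc O_Y(\pi^*D_\rho-mE)\to T_Y.
\]
The source of $\Psi$ identifies with $\bigl(\bigoplus_{\rho\in\Delta(1)}\pi^*\mc O_X(D_\rho)\bigr)\otimes\mc O_Y(-mE)$, on which the twisted pullback $\pi^*\sigma_X\otimes\mr{id}_{\mc O_Y(-mE)}$ produces a surjection onto $\pi^*T_X(-mE)$. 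On the dense torus $T$, under the canonical trivializations $\pi^*T_X|_T=T_Y|_T=\mc O_T^n$, both $\Psi|_T$ and $(\pi^*\sigma_X)|_T$ become the common summing map $[v_\rho]_{\rho\in\Delta(1)}:\mc O_T^{|\Delta(1)|}\to\mc O_T^n$; hence $\Psi|_T$ kills the kernel of the surjection to $\pi^*T_X(-mE)|_T$. Because $T_Y$ is torsion-free (being reflexive), the kernel is killed globally, so $\Psi$ descends to the sought morphism $\pi^*T_X(-mE)\to T_Y$. Its restriction to $T$ is the canonical isomorphism, giving essential surjectivity in the sense of Notation \ref{notaion}(7).

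The main technical obstacle I anticipate is the correct interpretation of the sheaves $\mc O_Y(\pi^*D_\rho-mE)$ and their identification with $\pi^*\mc O_X(D_\rho)\otimes\mc O_Y(-mE)$ when $D_\rho$ is only $\mb Q$-Cartier on $X$; this is precisely what the ``sufficiently divisible $m$'' hypothesis absorbs, by promoting every relevant $\mb Q$-Cartier divisor to a genuine Cartier one.
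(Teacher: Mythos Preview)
Your argument is correct and takes a route that is, in a precise sense, dual to the paper's. The paper works on the cotangent side: it pulls back the exact sequence $0\to\widehat\Omega^1_X\to M\otimes_{\mb Z}\mc O_X\to\bigoplus_\rho\mc O_{D_\rho}\to 0$, verifies left-exactness of the pullback by checking $L^1\pi^*\mc O_{D_\rho}=0$, and then builds a comparison map between this sequence and the corresponding sequence on $Y$ to obtain $\widehat\Omega^1_Y\to\pi^*\widehat\Omega^1_X(mE)$, which is dualized at the very end. You instead stay on the tangent side, using the Euler surjections $\sigma_X,\sigma_Y$ directly and factoring a map out of $\bigoplus_\rho\pi^*\mc O_X(D_\rho)(-mE)$ through its quotient $\pi^*T_X(-mE)$ via the torsion-freeness of $T_Y$. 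Both arguments rest on the same key inequality $\pi^*D_\rho\leq D_\rho+mE$ for $m\gg0$; your version is somewhat more direct in that it avoids the $L^1$-vanishing check and the final dualization.

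One caution about your last paragraph: making $m$ divisible does not make $D_\rho$ Cartier on $X$, so it does not literally justify the identification $\pi^*\mc O_X(D_\rho)\otimes\mc O_Y(-mE)\simeq\mc O_Y(\pi^*D_\rho-mE)$ in the genuinely $\mb Q$-Cartier case. What you actually need is only the existence of a morphism $\pi^*\mc O_X(D_\rho)\to\mc O_Y(D_\rho+mE)$ extending the identity on $T$, and this can be checked directly in toric coordinates: on $U_{\tilde\tau}\subseteq Y$ mapping to $U_\tau\subseteq X$, a generator $\chi^u\otimes\chi^w$ of $\mc O_X(D_\rho)(U_\tau)\otimes_{\mf k[S_\tau]}\mf k[S_{\tilde\tau}]$ lands in $\mc O_Y(D_\rho+mE)(U_{\tilde\tau})$ precisely because $\langle u,v_{\tilde\rho}\rangle\geq-\psi_\rho(v_{\tilde\rho})\geq-mb_{\tilde\rho}$ for each exceptional $\tilde\rho\preceq\tilde\tau$, which is exactly the inequality you arranged. (The paper makes the analogous identification $\pi^*\mc O_X(-D_\rho)=\mc O_Y(-\pi^*D_\rho)$ with the same informality.) With this small adjustment, your argument goes through cleanly.
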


\begin{proof}
  It suffices to prove the following case: $\sigma_2\subseteq \sigma_1\subseteq N_\mb R$ are top-dimensional, strongly convex, rational, simplicial cones, $X=U_{\sigma_1}, Y=U_{\sigma_2}$, and $\pi\colon Y\to X$ is given by the identity map $N\to N$. In this case, $\widehat{\Omega}^1_{\mf k[\sigma_2^\vee\cap M]/\mf k}$ is generated by $d\chi^{u'}$ where $u'\in\sigma_2^\vee\cap M$ (see \cite[\S8.1]{MR2810322}), and $\widehat{\Omega}^1_{\mf k[\sigma_1^\vee\cap M]/\mf k}\otimes_{\mf k[\sigma_1^\vee\cap M]}\mf k[\sigma_2^\vee\cap M]$ is generated by $d\chi^{u'}$ where $u'\in\sigma_1^\vee\cap M$. 
  We employ the following notation:
  \begin{itemize}
    \item $\sigma_2=\on{Cone}(u_1,\ldots,u_n)$ where $\{u_1,\ldots,u_n\}$ is a basis of $N$;
    \item $\sigma_2^\vee=\on{Cone}(u'_1,\ldots,u'_n)$ where $\{u'_1,\ldots,u'_n\}$ is the basis of $M$ dual to $\{u_1,\ldots,u_n\}$;
    \item $\sigma_1=\on{Cone}(v_1,\ldots,v_n)$ where each $v_i\in N$ is primitive;
    \item $\sigma_1^\vee=\on{Cone}(v'_1,\ldots,v'_n)$ where each $v'_i\in M$ is primitive and $\langle v'_i,v_j\rangle>0\Leftrightarrow i=j$ ;
    \item The rays which are common faces of $\sigma_1$ and $\sigma_2$ are $\mb R_{\geq 0}u_1,\ldots,\mb R_{\geq 0}u_k$. Hence we may assume $u_i=v_i$ for $1\leq i\leq k$ after reindexing. These rays correspond to non-$\pi$-exceptional torus invariant prime divisors.
  \end{itemize}
  Our goal is to prove that each $d\chi^{u'_\ell}$ is a linear combination of $d\chi^{v'}, v'\in\sigma_1^\vee\cap M$ with coefficients being rational functions on $Y$ whose poles are bounded by $mE$ for sufficiently large $m$. Hence $\widehat{\Omega}^1_{Y}\subseteq (\pi^*\widehat{\Omega}^1_{X})^{\vee\vee}(mE)$, and the required morphism is just the composition
  \[
     \pi^*T_X(-mE)\rightarrow (\pi^*\widehat{\Omega}^{1}_X)^\vee(-mE)=(\pi^*\widehat{\Omega}^{1}_X)^{\vee\vee\vee}(-mE)\rightarrow (\widehat{\Omega}_Y^1)^\vee=T_Y.
  \]

  Suppose $1\leq\ell\leq k$. Let $w'=v'_{k+1}+\cdots+v'_{n}$ and $\tilde v'_\ell=u'_\ell+qw'$ where $q\in \mb Z_{>0}$ is large. Then $\tilde v'_\ell\in \sigma_1^\vee$ since $\langle \tilde v'_\ell, v_j\rangle=\langle u'_\ell, v_j\rangle+\langle qw',v_j\rangle=\langle u'_\ell, u_j\rangle=\delta_{\ell j}$ for $1\leq j\leq k$, and $\langle \tilde v'_\ell, v_j\rangle=\langle u'_\ell, v_j\rangle+q\langle v'_j,v_j\rangle>0$ for $k+1\leq j\leq n$. Moreover, $v''_1=v'_1,\ldots,v''_\ell=\tilde v'_\ell,\ldots,v''_n=v'_n$ are linearly independent since the matrix $S$ defined by $S_{ij}=\langle v''_i,v_j\rangle$ is of full rank. Write $u'_\ell=\sum_{j=1}^n a_{\ell j}v''_j$ where $a_{\ell j}\in\mb Q$. We claim that $a_{\ell j}\neq 0$ only when $j=\ell$ or $k+1\leq j\leq n$. Indeed, we have $\langle u'_\ell,v_\beta\rangle=0=\langle v''_\alpha,v_\beta\rangle$ for $\alpha\in A=\{\ell,k+1,k+2,\ldots,n\}$ and $1\leq \beta\leq n,\beta\not\in A$. Since $v''_\ell,v''_{k+1},v''_{k+2},\ldots,v''_n$ are linearly independent and $\dim (\bigcap_{1\leq \beta\leq k, \beta\neq\ell} v_\beta^{\perp})=n-k+1$, we have $u'_\ell\in \bigcap_{1\leq \beta\leq k, \beta\neq\ell} v_\beta^{\perp}=\on{Span}_{\mb R}(v''_\ell,v''_{k+1},v''_{k+2},\ldots,v''_n)$. Observe that 
  $\frac{d\chi^{u'_\ell}}{\chi^{u'_\ell}}=\sum_{j=1}^na_{\ell j}\frac{d\chi^{v''_j}}{\chi^{v''_j}}$, or equivalently,
  \[
    d\chi^{u'_\ell}=\sum_{j=1}^na_{\ell j} \chi^{u'_\ell-v''_j} d\chi^{v''_j}.
  \]
  One can check by direct computation that if $a_{\ell j}\neq 0$, then $\langle u'_\ell-v''_j, u_{t}\rangle\geq 0$ for $1\leq t \leq k$.

  Suppose $k+1\leq \ell \leq n$. Then $\langle u'_\ell, v_\beta\rangle=0$ for $1\leq\beta\leq k$.
  That is, $u'_\ell\in \bigcap_{1\leq\beta\leq k} v_\beta^{\perp}=\on{Span}_\mb R(v'_{k+1},\ldots,v'_n)$, and by the same argument, we have
  \[
    d\chi^{u'_\ell}=\sum_{j=k+1}^na_{\ell j} \chi^{u'_\ell-v'_j} d\chi^{v'_j}.
  \]
  Hence $\langle u'_\ell-v'_j, u_{t}\rangle = 0$ when $a_{\ell j}\neq 0$ and $1\leq t \leq k$.
\end{proof}

\begin{lemma}\label{lemma3}
Let $f\colon X\dashrightarrow Y$ be a toric birational map between proper $\mb Q$-factorial toric varieties. Suppose that there exist an effective, $T$-invariant Cartier divisor $D$ on $X$ and a sequence of irreducible curves $\{1\in C_k\subseteq X\}_{k\in\mb N}$ such that
\begin{enumerate}[label={\upshape(\arabic*)}]
\item
$\on{exc}(f)\subseteq\on{Supp}(D)$ and 
\item
$\displaystyle\lim\limits_{k\rightarrow\infty}\frac{C_k\cdot D}{\on{mult}_1(C_k)}=\lim\limits_{k\rightarrow\infty}\frac{\overline{\mu}_{\min}(\nu_k^*T_X)}{\on{mult}_1(C_k)}=0$, where $\nu_k\colon\widetilde{C_k}\rightarrow C_k$ is the normalization.
\end{enumerate} 
Then $\vep(T_Y,1)=0$. 
\end{lemma}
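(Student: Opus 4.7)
The plan is to transport the curves $C_k$ to $Y$ via a resolution of $f$ and to establish the comparison
\[
\overline{\mu}_{\min}(\eta_k^{*}T_Y)\ \le\ \overline{\mu}_{\min}(\nu_k^{*}T_X)+\mr{const}\cdot(D\cdot C_k),
\]
which, after dividing by $\mr{mult}_1(C_k')=\mr{mult}_1(C_k)$, forces $\vep(T_Y,1)\le 0$ by hypothesis (2); combined with Proposition \ref{prop2}, this gives the conclusion.

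Let $Z$ be a complete simplicial toric variety whose fan refines both $\Delta_X$ and $\Delta_Y$, with the resulting toric birational morphisms $p:Z\to X$ and $q:Z\to Y$ satisfying $q=f\circ p$. Let $\overline{C}_k\subset Z$ be the strict transform of $C_k$ under $p$, and put $C_k':=q(\overline{C}_k)\subset Y$. Because $1\in T$ and both $p,q$ are isomorphisms over $T$, the three curves $C_k$, $\overline{C}_k$, $C_k'$ share a common normalization $\widetilde{C}_k$; writing $i_k:\widetilde{C}_k\to Z$ for the resulting lift, we have $\nu_k=p\circ i_k$, $\eta_k:=q\circ i_k$, and $\mr{mult}_1(C_k)=\mr{mult}_1(C_k')$ by Lemma \ref{lemma1}(3). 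I claim that every $q$-exceptional prime divisor $E\subset Z$ lies in $p^{-1}(\mr{Supp}(D))$: if $E$ is also $p$-exceptional then $p(E)\subseteq\mr{exc}(f)\subseteq\mr{Supp}(D)$, while otherwise $p|_E$ is birational onto a prime divisor $p(E)\subset X$ that $f$ contracts, so again $p(E)\subseteq\mr{exc}(f)\subseteq\mr{Supp}(D)$. Hence for $M\in\mb N$ sufficiently divisible, $G:=Mp^{*}D$ is an effective $T$-invariant Cartier divisor on $Z$ with $\mr{Supp}(G)\supseteq\mr{exc}(q)$. Applying Lemma \ref{lemma2} to $q:Z\to Y$ with this $G$ yields an essentially surjective morphism $q^{*}T_Y(-mG)\to T_Z$ for some $m\in\mb N$. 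Pulling back via $i_k$ preserves essential surjectivity since $i_k^{-1}(T)\subset\widetilde{C}_k$ is dense, and with $mG\cdot\overline{C}_k=mM(D\cdot C_k)$ by the projection formula, a standard minimal-slope computation gives
\[
\overline{\mu}_{\min}(i_k^{*}T_Z)\ \ge\ \overline{\mu}_{\min}(\eta_k^{*}T_Y)-mM(D\cdot C_k).
\]

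To close the argument I need $\overline{\mu}_{\min}(i_k^{*}T_Z)\le\overline{\mu}_{\min}(\nu_k^{*}T_X)$. The cotangent map $p^{*}\Omega_X^1\to\Omega_Z^1$ is injective with torsion cokernel over $p^{-1}(X_{\mr{sm}})\supset p^{-1}(T)$; dualizing yields an injection $T_Z\hookrightarrow p^{*}T_X$ on this open subset, which extends to a generically injective map on $Z$ via the reflexive-hull manipulations in the proof of Lemma \ref{lemma2}. Restricting to the smooth curve $\widetilde{C}_k$ gives an injection $i_k^{*}T_Z\hookrightarrow\nu_k^{*}T_X$ of torsion-free rank-$n$ sheaves, whence any torsion-free quotient of $\nu_k^{*}T_X$ pulls back to a same-rank quotient of $i_k^{*}T_Z$ of no larger slope, yielding the desired inequality (applying the same argument to Frobenius pullbacks, it passes to $\overline{\mu}_{\min}$). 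Combining,
\[
\overline{\mu}_{\min}(\eta_k^{*}T_Y)\ \le\ \overline{\mu}_{\min}(\nu_k^{*}T_X)+mM(D\cdot C_k),
\]
and dividing by $\mr{mult}_1(C_k)$ before letting $k\to\infty$ finishes the proof. The main obstacle is the global extension of $T_Z\hookrightarrow p^{*}T_X$ when $X$ is only $\mb Q$-factorial: the injection is immediate on the smooth locus of $X$, but extending it to all of $Z$ (or at least to a usable form on $\widetilde{C}_k$) requires the same reflexive bookkeeping as in Lemma \ref{lemma2}.
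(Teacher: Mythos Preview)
Your argument is correct and follows the paper's approach: resolve $f$ by a common simplicial refinement, apply Lemma~\ref{lemma2} to the morphism towards $Y$ to obtain $\beta^{*}T_Y\to T_W(mE)$, compare $T_W$ with $\alpha^{*}T_X$ via the natural differential, and evaluate on the strict transforms of the $C_k$. The obstacle you flag is not genuine: for the slope comparison you only need the map $T_Z\to p^{*}T_X$ to be an isomorphism over $T$ (so that its pullback to each $\widetilde C_k$ has torsion cokernel), and the paper sidesteps the reflexive bookkeeping by treating this map as essentially surjective and invoking Lemma~\ref{lemma1}(1) rather than arguing via injectivity.
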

Note that (2) is stronger than merely assuming $\vep(D,1)=\vep(T_X,1)=0$; in (2) we are assuming that the Seshadri constants are zero and that they can be achieved by the same sequence of irreducible curves.
\begin{proof}
Let $W$ be the smooth proper toric variety given by a common simplicial refinement of the fans of $X$ and $Y$. Then it gives the following commutative diagram of toric maps
\[
\begin{tikzcd}
&W\arrow[dr,"\beta"]\arrow[dl,"\alpha"']&\\
X\arrow[rr,dashed,"f"]&&Y
\end{tikzcd}.
\]

Let $E=\alpha^*D$. By Lemma~\hyperref[lemma1-3]{\ref*{lemma1}\ref*{lemma1-3}}, we have $\vep(T_Y,1)=\vep(\beta^*T_Y,1)$.  
Lemma~\ref{lemma2} gives the essentially surjective morphism $\beta^*T_Y\rightarrow T_W(mE)$, which implies $\vep(T_W(mE),1)\geq\vep(\beta^*T_Y,1)$ by Lemma~\hyperref[lemma1-2]{\ref*{lemma1}\ref*{lemma1-2}}. From the other essentially surjective morphism $T_W(mE)\rightarrow\alpha^*T_X(mE)$ we get $\vep(\alpha^*T_X(mE),1)\geq\vep(T_W(mE),1)$.
Decompose $\nu_k\colon\widetilde{C_k}\rightarrow C_k$ into $\widetilde{C_k}\overset{\overline{\nu}_k}{\rightarrow}\alpha^{-1}_*C_k\rightarrow C_k$
and note that, because of assumption (2),
\[
\frac{\overline{\mu}_{\min}(\overline{\nu}_k^*\alpha^*T_X(mE))}{\on{mult}_1(\alpha^{-1}_*C_k)}=\frac{\overline{\mu}_{\min}(\nu_k^*T_X(mD))}{\on{mult}_1(C_k)}\rightarrow 0\ \text{as}\ k\rightarrow\infty.
\]
Putting all these together, we conclude $\vep(T_Y,1)\leq 0$ and hence $\vep(T_Y,1)=0$ by Proposition~\ref{prop2}.
\end{proof}

\begin{proposition}\label{mmp}
Let $X=X(\Delta)$ be a proper $\mb Q$-factorial toric variety of dimension $n$ such that $\Delta$ satisfies \hyperref[dagger]{$(\dagger)$}.
\begin{enumerate}[label={\upshape(\arabic*)}]
\item  \label{mmp1} There is no surjective toric morphism from $X$ to a toric variety $Y(\Delta')$ such that $\dim X>\dim Y(\Delta')>0$.
\item  Suppose $\vep(D_\rho,1)=0$. Then there is a toric birational contraction $f\colon X\dashrightarrow X'$ which contracts $D_\rho$ and $\on{exc}(f)\subseteq D_\rho$.
\end{enumerate}
\end{proposition}

\begin{proof}

To prove (1), suppose on the contrary that we have such a morphism $f\colon X(\Delta)\rightarrow Y(\Delta')$. Let $\Delta$ and $\Delta'$ be fans in $N$ and $N'$, respectively. Then $f$ corresponds to a surjective map between lattices $\widetilde{\phi}\colon N\rightarrow N'$ such that the induced surjective map $\phi\colon N_{\mb R}\rightarrow N'_{\mb R}$ satisfies the following:
for any $\sigma\in\Delta$, we have $\phi(\sigma)\subseteq \sigma'$ for some $\sigma'\in\Delta'$. If there exists $\sigma\in\Delta$ such that $\sigma\not\subseteq \on{ker}(\phi)$ and $\on{ker}(\phi)\cap\on{reint}(\sigma)\neq\emptyset$, then $\phi(\sigma)$ is not strongly convex and cannot be contained in any cone in $\Delta'$. Hence $\{\sigma\in\Delta\mid\sigma\subseteq\on{ker}(\phi)\}$ is a complete fan in $N\cap\on{ker}(\phi)$. By assumption $\on{ker}(\phi)$ is not $0$ nor $N_\mb R$, which violates \hyperref[dagger]{$(\dagger)$}.

We now prove (2).
By running $D_\rho$-MMP, we only need to exclude the existence of a toric birational contraction $g\colon X(\Delta)\dashrightarrow X''(\Delta'')$ such that $\on{exc}(g)\subset D_\rho$, $D_\rho$ is not contracted by $g$, and either (i) $X''$ is a Mori fiber space, or (ii) $\widetilde{D}_\rho=g_*D_\rho$ is nef. Note that $\Delta''$ also satisfies \hyperref[dagger]{$(\dagger)$}.
In the case of (i),  by (1), $X''$ must be a Mori fiber space over a point. It is impossible since $\widetilde{D}_\rho$ cannot be anti-ample.
In the case of (ii), let $h\colon X''\rightarrow Z$ be the toric morphism defined by $|\widetilde{D}_\rho|$. By (1) and the fact $\widetilde{D}_\rho\not\sim 0$, we have $\dim Z=n$. Hence $h$ is birational and $\widetilde{D}_\rho=g^*A$ for some ample divisor $A$ on $Z$. By Lemma~\hyperref[lemma1-3]{\ref*{lemma1}\ref*{lemma1-3}}, $\vep(X'',\widetilde{D}_\rho;1)=\vep(Z,A;1)>0$. 
Taking a common simplicial refinement of $\Delta$ and $\Delta''$, we  get the following commutative diagram
\[
\begin{tikzcd}
&W\arrow[dr,"\beta"]\arrow[dl,"\alpha"']&\\
X\arrow[rr,dashed,"f"]&&X''
\end{tikzcd}.
\]
Now there exists $m\in\mb N$ such that $\alpha^*(mD_\rho)-\beta^*\widetilde{D}_\rho$ is an effective torus invariant divisor. We conclude that, again by Lemma~\hyperref[lemma1-3]{\ref*{lemma1}\ref*{lemma1-3}},
\[
0=\vep(mD_\rho, 1)=\vep(\alpha^*(mD_\rho), 1)\geq \vep(\beta^*\widetilde{D}_\rho, 1)=\vep(\widetilde{D}_\rho, 1),
\] 
which is a contradiction.
\end{proof}

\begin{proposition}\label{b}
$\vep(T_{X},1)>0$ if \hyperref[dagger]{$(\dagger)$} is satisfied.
\end{proposition}
\begin{proof}
Suppose on the contrary that the fan $\Delta$ of $X$ satisfies \hyperref[dagger]{$(\dagger)$} and $\vep(T_{X},1)=0$. 
Recall that we have the surjective morphism
\[
\bigoplus_{\rho\in\Delta(1)}\mc O_X(D_\rho)\rightarrow T_X\rightarrow 0.
\]
Let $C$ be an irreducible curve on $X$ passing through $1$. Then we have the exact sequence
\[
\nu^*(\bigoplus_{\rho\in\Delta(1)}\mc O_X(D_\rho))\rightarrow \nu^*T_X\rightarrow 0
\]
where $\nu\colon \widetilde{C}\rightarrow C$ is the normalization. It follows that 
\[
\frac{\overline{\mu}_{\min}(\nu^*T_X)}{\on{mult}_1(C)}\geq \frac{\overline{\mu}_{\min}(\nu^*(\bigoplus_{\rho\in\Delta(1)}\mc O_X(D_\rho)))}{\on{mult}_1(C)}=\frac{\min_{\rho\in\Delta(1)}\{C\cdot D_\rho\}}{\on{mult}_1(C)}. 
\]
Hence, we can choose a sequence of irreducible curves $\{1\in C_k\subseteq X\}_{k\in\mb N}$ such that, for some $\rho\in\Delta(1)$,
\[
\lim\limits_{k\rightarrow\infty}\frac{C_k\cdot D_\rho}{\on{mult}_1(C_k)}=\lim\limits_{k\rightarrow\infty}\frac{\overline{\mu}_{\min}(\nu_k^*T_X)}{\on{mult}_1(C_k)}=0,
\]
where $\nu_k\colon\widetilde{C_k}\rightarrow C_k$ is the normalization. Thus $\vep(X,D_\rho)=0$ and by Proposition~\ref{mmp}, we have a toric birational contraction $f\colon X\dashrightarrow X'$ such that $\on{exc}(f)\subseteq\on{Supp}(D_\rho)$ and $\rho(X)>\rho(X')$. Then the fan of $X'$ also satisfies \hyperref[dagger]{$(\dagger)$} and $\vep(T_{X'},1)=0$ by Lemma~\ref{lemma3}. We get a contradiction since this process cannot be carried out for infinitely many times.
\end{proof}

\begin{theorem}\label{combinatoric}
Suppose that $X=X(\Delta)$ is a proper $\mb Q$-factorial toric variety. Then $\vep(T_X,1)>0$ if and only if $\Delta$ satisfies \hyperref[dagger]{$(\dagger)$} in Definition~\ref{dagger}.
\end{theorem}
\begin{proof}
It's just the combination of Proposition~\ref{a} and Proposition~\ref{b}.
\end{proof}

\section{Proof of Theorem~\ref{thm1}} 
\begin{proposition}\label{splitting}
If $X=X(\Delta)$ is a smooth projective toric variety of dimension $n$ and $Y\subseteq X$ is a $T$-invariant prime divisor, then the short exact sequence 
\[
0\rightarrow T_Y\rightarrow T_X|_Y\rightarrow N_{Y/X}\rightarrow 0
\]
splits.
\end{proposition}
\begin{proof}
In this proof, we use the terms "invertible sheaves" and "line bundles" interchangeably. We have $Y=D_\rho$ for some $\rho\in\Delta(1)$.
It is enough to show that $T_X|_Y$ contains a line sub-bundle $L$ such that $L_y\nsubseteq T_{Y,y}$ for any $y\in Y$. For any $\sigma\in\Delta(n)$ containing $\rho$,
from the proof of Proposition~\ref{prop2}, we have a natural map $\mc O_{U_\sigma}(D_{\rho}|_{U_\sigma})\to T_{U_\sigma}$ which fits into the decomposition 
\[
  T_{U_\sigma}=\bigoplus_{\rho'\prec\sigma,\,\rho'\in\Delta(1)}\mc O_{U_\sigma}(D_{\rho'}|_{U_\sigma}).
\]
Let $U:=\bigcup_{\rho\prec\sigma\in\Delta(n)} U_\sigma$. Then $Y\subseteq U$ and the above maps glue together to give a line sub-bundle $\mc O_U(D_\rho|_U)\to T_U$. If we set $L:=\mc O_U(D_\rho|_U)\otimes_{\mc O_U}\mc O_Y$, then $L$ is a line sub-bundle of $T_X|_Y$.  For any $\sigma\in\Delta(n)$ containing $\rho$, we have $U_\sigma\simeq\mb A^n$ and $D_\rho\vert_{U_\sigma}=\{x_1=0\}$ after reindexing; the image of $\mc O_{U_\sigma}(D_\rho\vert_{U_\sigma})\to T_{U_\sigma}$ is given by $\frac{\partial}{\partial x_1}$. Then it is clear that $L_y\nsubseteq T_{Y,y}$ for any $y\in Y$.
\end{proof}

\begin{definition}
A non-empty subset $\mc B=\{v_1,\cdots,v_k\}\subseteq G(\Delta)$ is called a primitive collection if for each $i$, $\mc B\setminus\{v_i\}$ generates a ($k-1$)-dimensional cone in $\Delta$, while $\mc B$ does not generate a $k$-dimensional cone in $\Delta$. 
\end{definition}

\begin{proposition}[{\cite[Proposition 3.2]{MR1133869}}]\label{primitive}
Suppose that $X=X(\Delta)$ is a smooth projective toric variety. Then there exists a primitive collection $\mc B=\{v_1,\cdots,v_k\}$ such that $v_1+\cdots +v_k=0$.
\end{proposition} 

\begin{proof}[Proof of Theorem~\ref{thm1}]
Suppose $p\in T$. We may assume $p=1$.
Let $\mc B=\{v_1,\cdots,v_k\}\subseteq G(\Delta)$ be a primitive collection such that $v_1+\cdots +v_k=0$, whose existence is guaranteed by Proposition~\ref{primitive}. If $k\leq n$, then $\Delta$ does not satisfy \hyperref[dagger]{$(\dagger)$} and by Theorem \ref{combinatoric} we must have $\vep(T_X,1)=0$ . Hence $k=n+1$ and $X\simeq\mb P^n$.

Suppose $p\in X\setminus T$ and thus $p\in D_\rho$ for some $\rho\in\Delta(1)$.
We have the splitting short exact sequence from Proposition~\ref{splitting}
\[
0\rightarrow T_{D_\rho}\rightarrow T_X|_{D_\rho}\rightarrow N_{D_{\rho}/X}\rightarrow 0.
\]
Lemma~\hyperref[lemma1-2]{\ref*{lemma1}\ref*{lemma1-2}} then gives
\[
\vep(T_X,p)\leq\vep(T_X|_{D_\rho},p)=\min\{\vep(T_{D_\rho},p),\vep(N_{D_{\rho}/X},p)\}.
\]
By induction on $\dim X$, we may assume that $D_\rho\simeq\mb P^{n-1}$ and $N_{D_{\rho}/X}$ is ample. 

Let $\sigma=\langle v_1,\cdots,v_n=v_\rho\rangle\in\Delta(n)$. Then all the rays adjacent to $\rho$ other than $\rho$ itself are $\langle v_1\rangle,\cdots,\langle v_{n-1}\rangle$   
and $\langle v_{n+1}\rangle$, where $v_{n+1}=-v_1-\cdots-v_{n-1}-\ell v_n$ for some $\ell\in\mb Z$. Since $N_{D_\rho/X}\simeq\mc O_{D_\rho}(D_\rho|_{D_\rho})$ is ample, we must have $\ell>0$ and thus all the other rays in $\Delta$ are contained in $\langle v_1,\cdots,v_{n-1},v_{n+1}\rangle$ (see Figure 1).
Let $\mc B=\{w_1,\cdots,w_k\}\subseteq G(\Delta)$ be a primitive collection as in Proposition~\ref{primitive}. The condition $w_1+\cdots+w_k=0$  implies that $v_n\in\mc B$.\\ 
\textbf{Case 1}: $k\geq 3$.
From the definition of primitive collections, $\langle w_i\rangle$ is adjacent to $\rho$ for each $i$ and hence $\mc B\subseteq \{v_1,\cdots,v_{n+1}\}$. Since $\overline{w}_1+\cdots+\overline{w}_k=0$ in $N_{\mb R}/\langle\pm\rho\rangle$, we conclude that $\mc B=\{v_1,\cdots,v_{n+1}\}$ and hence $X\simeq\mb P^n$. \\
\textbf{Case 2}: $k=2$.
We have $-v_n\in \mc B\subseteq G(\Delta)$ and hence $-\rho\in \Delta$. By using the argument similar to that in Proposition~\ref{a} we conclude $p\notin O_\rho$. Consequently,  $p\in D_{\rho'}$ for some $\rho'=\langle v_i\rangle,\ i\neq n$. But the same argument implies $v_i\in\mc B$, which is absurd.
\end{proof}

\begin{figure}
\centering
\tikzset{every picture/.style={line width=0.75pt}} 
\begin{tikzpicture}[x=0.75pt,y=0.75pt,yscale=-1,xscale=1]
\draw [fill=lightgray][color={rgb, 255:red, 200; green, 200; blue, 200 }  ][line width=0.75] (69,127)--(78.88,148)--(148,103.5)--(69,127);
uncomment if require: \path (0,235); 

Straight Lines [id:da08157189792382913] 
\draw    (100,103) -- (196,103.12) ;
\draw [shift={(198,103.12)}, rotate = 180.07] [color={rgb, 255:red, 0; green, 0; blue, 0 }  ][line width=0.75]    (10.93,-3.29) .. controls (6.95,-1.4) and (3.31,-0.3) .. (0,0) .. controls (3.31,0.3) and (6.95,1.4) .. (10.93,3.29)   ;
Straight Lines [id:da2606257355924233] 
\draw    (100,103) -- (100,22.12) ;
\draw [shift={(100,20.12)}, rotate = 90] [color={rgb, 255:red, 0; green, 0; blue, 0 }  ][line width=0.75]    (10.93,-3.29) .. controls (6.95,-1.4) and (3.31,-0.3) .. (0,0) .. controls (3.31,0.3) and (6.95,1.4) .. (10.93,3.29)   ;
Straight Lines [id:da4181995678327064] 
\draw    (100,103) -- (41.6,146.92) ;
\draw [shift={(40,148.12)}, rotate = 323.06] [color={rgb, 255:red, 0; green, 0; blue, 0 }  ][line width=0.75]    (10.93,-3.29) .. controls (6.95,-1.4) and (3.31,-0.3) .. (0,0) .. controls (3.31,0.3) and (6.95,1.4) .. (10.93,3.29)   ;
Straight Lines [id:da33594745074415977] 
\draw    (100,103) -- (91.81,119.72) ;
\draw  [dashed]  (78.88,148) --(91.81,119.72);
\draw    (78.88,148) -- (50.88,203.32) ;

\draw [shift={(50,205.12)}, rotate = 296.09] [color={rgb, 255:red, 0; green, 0; blue, 0 }  ][line width=0.75]    (10.93,-3.29) .. controls (6.95,-1.4) and (3.31,-0.3) .. (0,0) .. controls (3.31,0.3) and (6.95,1.4) .. (10.93,3.29)   ;
Shape: Circle [id:dp7362082900248492] 
\draw  [fill={rgb, 255:red, 0; green, 0; blue, 0 }  ,fill opacity=1 ] (98.56,63) .. controls (98.56,62.2) and (99.2,61.56) .. (100,61.56) .. controls (100.8,61.56) and (101.44,62.2) .. (101.44,63) .. controls (101.44,63.8) and (100.8,64.44) .. (100,64.44) .. controls (99.2,64.44) and (98.56,63.8) .. (98.56,63) -- cycle ;
Shape: Circle [id:dp3981691698264984] 
\draw  [fill={rgb, 255:red, 0; green, 0; blue, 0 }  ,fill opacity=1 ] (146.12,103.06) .. controls (146.12,102.26) and (146.76,101.62) .. (147.56,101.62) .. controls (148.36,101.62) and (149,102.26) .. (149,103.06) .. controls (149,103.86) and (148.36,104.5) .. (147.56,104.5) .. controls (146.76,104.5) and (146.12,103.86) .. (146.12,103.06) -- cycle ;
Shape: Circle [id:dp1921016978384935] 
\draw  [fill={rgb, 255:red, 0; green, 0; blue, 0 }  ,fill opacity=1 ] (67.12,126.56) .. controls (67.12,125.76) and (67.76,125.12) .. (68.56,125.12) .. controls (69.36,125.12) and (70,125.76) .. (70,126.56) .. controls (70,127.36) and (69.36,128) .. (68.56,128) .. controls (67.76,128) and (67.12,127.36) .. (67.12,126.56) -- cycle ;
Shape: Circle [id:dp1439865891217127] 
\draw  [fill={rgb, 255:red, 0; green, 0; blue, 0 }  ,fill opacity=1 ] (77,147.56) .. controls (77,146.76) and (77.64,146.12) .. (78.44,146.12) .. controls (79.24,146.12) and (79.88,146.76) .. (79.88,147.56) .. controls (79.88,148.36) and (79.24,149) .. (78.44,149) .. controls (77.64,149) and (77,148.36) .. (77,147.56) -- cycle ;
\draw [color={rgb, 255:red, 136; green, 136; blue, 136 }](100,103)--(106,124)--(110,138);
\filldraw [color={rgb, 255:red, 136; green, 136; blue, 136 }](106,124) circle (1pt);
\draw [color={rgb, 255:red, 136; green, 136; blue, 136 }](116,159)--(122,180);
\draw [color={rgb, 255:red, 136; green, 136; blue, 136 }](118,176)--(122,180)--(123.5,174.5);
\draw (122,188) node [anchor=north west][inner sep=0.75pt]  [font=\footnotesize,rotate=-359] [align=left] {$\displaystyle \rho'$: not adjacent to $\rho$};

\draw (107.79,2.14) node [anchor=north west][inner sep=0.75pt]  [font=\footnotesize,rotate=-359] [align=left] {$\displaystyle \rho $};
\draw (50,112) node [anchor=north west][inner sep=0.75pt]  [font=\footnotesize]  {$v_{1}$};
\draw (103,52) node [anchor=north west][inner sep=0.75pt]  [font=\footnotesize]  {$v_{n}$};
\draw (151,87) node [anchor=north west][inner sep=0.75pt]  [font=\footnotesize]  {$v_{i}$};
\draw (90,140) node [anchor=north west][inner sep=0.75pt]  [font=\footnotesize]  {$v_{n+1} =-v_{1} -\cdots -v_{n-1} -\ell v_{n}$};
\end{tikzpicture}
\caption{The rays adjacent to $\rho$. The images of $\langle v_1\rangle,\ldots,\langle v_{n-1}\rangle,\langle v_{n+1}\rangle$ in $N_{\mb R}/\langle v_n\rangle$ form the $1$-skeleton of the fan of $\mb P^{n-1}$. }
\label{pic1}
\end{figure}
The following example demonstrates that the conjecture fails  when the variety is not smooth, even in the toric case:
 \begin{example}\label{Example:terminal_threefold}
   Let $N=\mb Z^3$ and consider the fan $
   \Delta$ generated by $(1,0,0)$, $(0,1,0)$, $(0,0,1)$ and $(-1,-2,-3)$.
   Then $X(\Delta)$ is isomorphic to the weighted projective space $\mb P(1,1,2,3)$, which has terminal singularities.
We have $\vep(T_X,1)>0$ by Theorem~\ref{combinatoric} while $X\not\simeq\mb P^3$.
 \end{example}
\section{Some formulas for Seshadri constants}
\begin{proposition}\label{prop3}
   Suppose $X=X(\Delta)$ is a smooth projective toric variety. Then for any $p\in X$, 
   \[
   \vep(T_X,p)=\min_{\rho\in\Delta(1)}\{\vep(D_\rho,p)\}.
   \]
 \end{proposition}
 \begin{proof}
 If $X\simeq\mb P^n$, then $\vep(T_{\mb P^n},p)=1=\vep(D,p)$ for any $p\in\mb P^n$ and any $T$-invariant divisor $D$ on $\mb P^n$.
Suppose $X$ is not isomorphic to the projective space. 
For any $p\in X$, the surjective morphism
$\bigoplus_{\rho\in\Delta(1)}\mathcal O_X(D_\rho)\rightarrow T_X$ from the proof of Proposition~\ref{prop2}
implies $\vep(T_X,p)\geq\min_{\rho\in\Delta(1)}\{\vep(D_\rho,p)\}$. 
If $p\in D_\rho$, the proof of Theorem~\ref{thm1} gives 
$
\vep(T_X,p)\leq\min\{\vep(T_{D_\rho},p),\vep(N_{D_\rho/X},p)\}.
$
Using $N_{D_\rho/X}\simeq \mc O_X(D_\rho)\vert_{D_\rho}$, we obtain
\[
\min_{p\in D_\rho}\{\vep(\mc O_X(D_\rho)\vert_{D_\rho},p)\}\geq\vep(T_X,p)\geq\min_{\rho\in\Delta(1)}\{\vep(D_\rho,p)\}.
\]
By Theorem~\ref{thm1} we must have $\vep(T_X,p)\leq 0$ for all $p\in X$.

If $\vep(T_X,p)<0$, then $\min_{\rho\in\Delta(1)}\{\vep(D_\rho,p)\}<0$ and thus $\min_{\rho\in\Delta(1)}\{\vep(D_\rho,p)\}=\min\{\vep(D_\rho,p)\mid \rho\in\Delta(1),\vep(D_\rho,p)<0\}$. On the other hand, if $\vep(D_\rho,p)<0$, then $p\in D_\rho$ and $\vep(\mc O_X(D_\rho)\vert_{D_\rho},p)=\vep(D_\rho,p)$. So $\vep(T_X,p)=\min\{\vep(D_\rho,p)\mid \rho\in\Delta(1),\vep(D_\rho,p)<0\}=\min_{\rho\in\Delta(1)}\{\vep(D_\rho,p)\}$.

If $\vep(T_X,p)= 0$, then $\min_{p\in D_\rho}\{\mc O_X(D_\rho)\vert_{D_\rho}\}\geq 0$ and thus $\vep(D_\rho,p)\geq 0$ whenever $p\in D_\rho$ as in the previous paragraph. Hence
$\min_{\rho\in\Delta(1)}\{\vep(D_\rho,p)\}\geq 0$, and as a result, $\vep(T_X,p)= 0=\min_{\rho\in\Delta(1)}\{\vep(D_\rho,p)\}$. 
\end{proof}

\begin{corollary}\label{cor5}
Assume as above. Then $\vep(T_X,p)$ is lower semicontinuous in $p$.
\end{corollary}
\begin{proof}
If $X\simeq \mb P^n$, then $\vep(T_X,p)=1$ is a constant. Suppose $X\not\simeq\mb P^n$.
Proposition~\ref{prop3} and Theorem~\ref{thm1} imply $
\vep(T_X,p)=\min\{0,\min_{p\in D_\rho}\{\vep(D_\rho,p)\}\}$
and the statement follows.
\end{proof} 
Note that the tangent bundle of a smooth projective toric variety is big but not necessarily nef. It would be interesting to see whether Corollary~\ref{cor5} is true for every smooth projective variety $X$ with big $T_X$.
 \begin{example}
 In the proof of Proposition~\ref{prop3}, we see that $\vep(T_X,p)=\min_{p\in D_\rho}\{\vep(D_\rho,p)\}$ if $\vep(T_X,p)<0$. In general,
 to apply Proposition~\ref{prop3}, we need to calculate $\vep(D_\rho,p)$ for every $\rho\in\Delta(1)$ no matter whether $D_\rho$ contains $p$ or not.
  Let $N=\mb Z^2$, and let $\Delta$ be the fan generated by $v_1=(1,0),v_2=(0,1),v_3=(0,-1)$, and $v_4=(-1,r)$.  Then $X(\Delta)\simeq\Sigma_r$, the $r$-th Hirzebruch surface.
 If $p$ is a general point on $D_2$, then $\vep(D_2,p)=-r$ and hence $\vep(T_X,p)<0$. As a result, $\vep(T_X,p)=\min_{p\in D_\rho}\{\vep(D_\rho,p)\}=-r$, since $D_2$ is the only $T$-invariant divisor containing $p$.
 
 Now let $p$ be a general point on $D_3$. We have $\vep(D_1,p)=\vep(D_2,p)=\vep(D_4,p)=0$. Note that $\vep(D_3,p)$ can be calculated on $Y(\Delta')$, where $\Delta'$ is the fan generated by $v_1,v_3$ and $v_4$. 
Then $\vep(D_3,p)>0$
 since $D_3'$ on $Y(\Delta')$ is ample. Hence 
$\vep(T_X,p)=0\neq \vep(D_3,p)=\min_{p\in D_\rho}\{\vep(D_\rho,p)\}$.
 \end{example}
 
 Although the above example shows that we can actually have $0>\vep(T_X,p)>-\infty$, it is quite rare in the sense of the following proposition.
 
 \begin{proposition} \label{prop4}
   Suppose $X$ is a smooth projective toric variety. If $\on{dim}(\overline{\on{orb}(p)})\geq 2$, then $\vep(T_X,p)=0$ or $-\infty$.
   In other words, the Seshadri constant is "nontrivial" (meaning that $0>\vep(T_X,p)>-\infty$) only if $\on{dim}(\overline{\on{orb}(p)})\leq 1$. 
 \end{proposition}
Note that $\on{dim}(\overline{\on{orb}(p)})\leq 1$ does not guarantee $\vep(T_X,p)>-\infty$. For example, let $X=\on{Bl}_{p}\mb P^3\overset{\pi}{\rightarrow} \mb P^3$ and let $E=\on{exc}(\pi)$. 
We have $\vep(E,q)=-\infty$ for any $q\in E$ since $E\simeq\mb P^2$ and $E|_E$ is anti-ample.
Then $\vep(T_X,q)=-\infty$ for any $q\in E$ by Proposition~\ref{prop3}.
\begin{lemma}\label{lemma7}
Let $X=X(\Delta)$ be a proper $\mb Q$-factorial toric variety such that $\dim X\geq 2$, and let $D=\sum_{\rho\in\Delta(1)} a_\rho D_\rho$ be a $T$-invariant $\mb Q$-divisor. If $\vep(D,1)<0$, then $\vep(D,1)=-\infty$.
\end{lemma}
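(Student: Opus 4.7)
The plan is to construct, assuming $\vep(D,1)<0$, explicit irreducible curves $C_k\ni 1$ with $D\cdot C_k/\mr{mult}_1(C_k)\to-\infty$. First, by the definition of Seshadri constant, the hypothesis gives an irreducible curve $C_0\ni 1$ with $D\cdot C_0<0$. Because $1\in T$ lies in no $T$-invariant divisor, $C_0\not\subseteq D_\rho$ for any $\rho$, so $b_\rho:=D_\rho\cdot C_0\geq 0$ for every $\rho\in\Delta(1)$. Since the character divisor $\mr{div}(\chi^m)=\sum_\rho\langle m,v_\rho\rangle D_\rho$ is principal and hence has degree zero on $C_0$, we deduce the relation $\sum_\rho b_\rho v_\rho=0$ in $N_\mb R$.

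Because $\Delta$ is a complete fan, $G(\Delta)$ positively spans $N_\mb R$, so I fix positive integers $(c_\rho)_{\rho\in\Delta(1)}$ with $\sum_\rho c_\rho v_\rho=0$. For each $k\in\mb N$, the relation $\sum_\rho(k b_\rho+c_\rho)v_\rho=0$ has all coefficients strictly positive, and I apply the one-parameter subgroup construction from the proof of Proposition \ref{a}: picking distinct $\lambda_\rho\in\mf k^*$, the map $\phi_k(t)=\prod_\rho\phi_\rho(t-\lambda_\rho)^{k b_\rho+c_\rho}$ extends to a morphism $\bar\phi_k:\mb P^1\to X$ sending $\infty$ to $1$, and I set $C_k:=\bar\phi_k(\mb P^1)$. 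Pulling back characters gives $\bar\phi_k^*D_\rho=(k b_\rho+c_\rho)[\lambda_\rho]$ as a Weil divisor on $\mb P^1$, and expanding $\phi_k(1/s)$ at $s=0$ yields image tangent vector $-\sum_\rho(k b_\rho+c_\rho)\lambda_\rho v_\rho$, which is nonzero for generic $\lambda_\rho$, so $\bar\phi_k$ is immersive at $\infty$. Asserting further that for generic $\lambda_\rho$ the map $\bar\phi_k$ is birational onto $C_k$ with $\bar\phi_k^{-1}(1)=\{\infty\}$ set-theoretically, one obtains $\mr{mult}_1(C_k)=1$ and $D\cdot C_k=k(D\cdot C_0)+\sum_\rho a_\rho c_\rho\to-\infty$, forcing $\vep(D,1)=-\infty$.

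The main obstacle is the genericity step: verifying that for $\lambda_\rho$ in a dense open subset of parameter space, $\bar\phi_k$ is birational onto $C_k$ and has a unique preimage of $1$. Without these properties, the projection formula introduces a factor of $d_k=\deg(\bar\phi_k\to C_k)$ in $D\cdot C_k$, and the naive construction using only the relation $\sum b_\rho v_\rho=0$ scaled by $k$ illustrates the failure mode concretely (e.g., in $\mb P^1\times \mb P^1$ with $C_0$ a ruling, the image would coincide with $C_0$ and $d_k=k$, giving a constant Seshadri ratio). The role of the auxiliary relation $\sum c_\rho v_\rho=0$ with $c_\rho>0$ for every $\rho$, combined with the hypothesis $\dim X\geq 2$, is to ensure that $C_k$ is a genuinely proper subvariety of $X$ varying with $k$, so that for generic $\lambda_\rho$ the characters $\chi^m\circ\phi_k$ (with $m$ ranging over a basis of $M$) generate $\mf k(t)$ and the parametrization $\bar\phi_k$ is birational.
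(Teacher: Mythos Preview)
Your approach is essentially the same as the paper's: both use Payne's one-parameter construction to produce irreducible curves $C_k$ with $C_k\cdot D_\rho$ growing linearly in $k$. The paper simply cites \cite[Section~3]{Pay2} for the existence of an irreducible curve $C_k$ with $C_k\cap T\neq\emptyset$ and $C_k\cdot D_\rho=k\,m_\rho$ (where $m_\rho=C_0\cdot D_\rho$), rather than redoing the construction, so the birationality of $\bar\phi_k$ onto its image that you flag as the ``main obstacle'' is exactly the content of that reference. Your auxiliary relation $\sum c_\rho v_\rho=0$ with all $c_\rho>0$ is a reasonable device, but unnecessary once you invoke Payne's result directly.

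Where the paper is genuinely cleaner is in handling the point $1$. You insist that $\bar\phi_k(\infty)=1$ and then need to verify both that $\bar\phi_k$ is immersive at $\infty$ and that $\bar\phi_k^{-1}(1)=\{\infty\}$ set-theoretically; you leave this as a genericity assertion. The paper sidesteps this completely: once $C_k$ is known to meet $T$, pick any \emph{smooth} point $t\in C_k\cap T$ (a generic point of an irreducible curve is smooth), and apply the torus translation by $t^{-1}$. This sends $C_k$ to a curve through $1$ with $\mr{mult}_1=1$, without any analysis of a particular parametrization. This trick eliminates the second half of your obstacle entirely and is worth remembering.
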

\begin{proof}
By assumption, there exists an irreducible curve $C$ passing through $1\in T$ such that $C\cdot D<0$. Let $m_\rho=C\cdot D_\rho$. Then we have $N:=\sum_{\rho\in\Delta(1)}m_\rho a_\rho<0$. For any $k>0$, by \cite[Section 3]{Pay2} we have an irreducible curve $C_k$ such that $C_k\cap T\neq\emptyset$ and $C_k\cdot D_\rho=km_\rho$ for each $\rho\in\Delta(1)$. After applying suitable automorphism $t\in T$, we may assume $1\in C_k$ and $\on{mult}_1(C_k)=1$. Thus $\frac{C_k\cdot D}{\on{mult}_1(C_k)}=kN\rightarrow -\infty$ as $k\rightarrow\infty$.
\end{proof}
\begin{proof}[Proof of Proposition~\ref{prop4}]
Suppose $\vep(T_X,p)<0$. Then by Proposition~\ref{prop3}, there exists an irreducible curve $C$ containing $p$ such that $C\cdot D_\rho<0$ for some $\rho\in\Delta(1)$. Let $\tau\in\Delta$ be the unique cone such that $O_\tau\cap C$ is dense in $C$. 
Note that $V_\tau$ contains $C$ and $p$, and we may assume $1_\tau\in C$
after applying suitable automorphism $t\in T$.
We can write $D_\rho|_{V_\tau}\sim E$ for some $O_\tau$-invariant divisor $E$ on $V_\tau$ and thus $C\cdot E<0$. 
If $\on{dim}(\overline{\on{orb}(p)})\geq 2$, then $\dim V_\tau\geq 2$ and $\vep(E,1_\tau\in O_\tau)=-\infty$ by Lemma~\ref{lemma7}. 
Therefore $\vep(T_X,1_\tau)=-\infty$ again by Proposition~\ref{prop3}, and we have $\vep(T_X,p)=-\infty$ by semi-continuity Corollary~\ref{cor5}.  
\end{proof}

Finally, we raise a question on the necessity of the projectiveness assumption.
\begin{question}\label{projectiveness}
Is there any smooth complete fan $\Delta$, other than the fan of the projective space, satisfying the condition \hyperref[dagger]{$(\dagger)$} in Definition~\ref{dagger}?
\end{question}

In view of Proposition~\ref{primitive}, $X(\Delta)$ must be non-projective.
If such an example does exist, then it will serve as a counterexample of Conjecture~\ref{conj} without assuming that $X$ is projective. If the answer to Question~\ref{projectiveness} is no, then the proof of Theorem~\ref{thm1} can be carried over and we can generalize the theorem to smooth proper toric varieties.

\subsection*{Acknowledgements} 
The author would like to thank Sam Payne for the useful advice concerning the toric MMP. He also thanks
Jungkai Chen for the warm encouragement during the preparation of the paper. Part of the work was done during the author's stay in NCTS (National Center for Theoretical Sciences). This work is supported by NSTC (National Science and Technology Council) under Grant No. 111-2115-M-002-004-MY3 and by National Taiwan University.

\bibliographystyle{alpha}
\bibliography{mybib}

\newcommand{\etalchar}[1]{$^{#1}$}
\begin{thebibliography}{BDRH{\etalchar{+}}08}

\bibitem[Bat91]{MR1133869}
Victor~V. Batyrev.
\newblock On the classification of smooth projective toric varieties.
\newblock {\em Tohoku Math. J. (2)}, 43(4):569--585, 1991.

\bibitem[BDRH{\etalchar{+}}08]{https://doi.org/10.48550/arxiv.0810.0728}
Thomas Bauer, Sandra Di~Rocco, Brian Harbourne, Michal Kapustka,
  Andreas~Leopold Knutsen, Wioletta Syzdek, and Tomasz Szemberg.
\newblock A primer on seshadri constants, 2008.

\bibitem[BS09]{MR2533311}
Thomas Bauer and Tomasz Szemberg.
\newblock Seshadri constants and the generation of jets.
\newblock {\em J. Pure Appl. Algebra}, 213(11):2134--2140, 2009.

\bibitem[BSS93]{MR1248115}
Mauro~C. Beltrametti, Michael Schneider, and Andrew~J. Sommese.
\newblock Applications of the {E}in-{L}azarsfeld criterion for spannedness of
  adjoint bundles.
\newblock {\em Math. Z.}, 214(4):593--599, 1993.

\bibitem[BSS96]{MR1360498}
Mauro~C. Beltrametti, Michael Schneider, and Andrew~J. Sommese.
\newblock Chern inequalities and spannedness of adjoint bundles.
\newblock In {\em Proceedings of the {H}irzebruch 65 {C}onference on
  {A}lgebraic {G}eometry ({R}amat {G}an, 1993)}, volume~9 of {\em Israel Math.
  Conf. Proc.}, pages 97--107. Bar-Ilan Univ., Ramat Gan, 1996.

\bibitem[CK11]{MR2784746}
Steven~Dale Cutkosky and Kazuhiko Kurano.
\newblock Asymptotic regularity of powers of ideals of points in a weighted
  projective plane.
\newblock {\em Kyoto J. Math.}, 51(1):25--45, 2011.

\bibitem[CLS11]{MR2810322}
David~A. Cox, John~B. Little, and Henry~K. Schenck.
\newblock {\em Toric varieties}, volume 124 of {\em Graduate Studies in
  Mathematics}.
\newblock American Mathematical Society, Providence, RI, 2011.

\bibitem[CMSB02]{MR1929792}
Koji Cho, Yoichi Miyaoka, and N.~I. Shepherd-Barron.
\newblock Characterizations of projective space and applications to complex
  symplectic manifolds.
\newblock In {\em Higher dimensional birational geometry ({K}yoto, 1997)},
  volume~35 of {\em Adv. Stud. Pure Math.}, pages 1--88. Math. Soc. Japan,
  Tokyo, 2002.

\bibitem[Dem92]{MR1178721}
Jean-Pierre Demailly.
\newblock Singular {H}ermitian metrics on positive line bundles.
\newblock In {\em Complex algebraic varieties ({B}ayreuth, 1990)}, volume 1507
  of {\em Lecture Notes in Math.}, pages 87--104. Springer, Berlin, 1992.

\bibitem[FM21]{FM21}
Mihai Fulger and Takumi Murayama.
\newblock Seshadri constants for vector bundles.
\newblock {\em J. Pure Appl. Algebra}, 225(4):Paper No. 106559, 35, 2021.

\bibitem[Ful93]{MR1234037}
William Fulton.
\newblock {\em Introduction to toric varieties}, volume 131 of {\em Annals of
  Mathematics Studies}.
\newblock Princeton University Press, Princeton, NJ, 1993.
\newblock The William H. Roever Lectures in Geometry.

\bibitem[Hac00]{MR1779893}
Christopher~D. Hacon.
\newblock Remarks on {S}eshadri constants of vector bundles.
\newblock {\em Ann. Inst. Fourier (Grenoble)}, 50(3):767--780, 2000.

\bibitem[HMP10]{HMP}
Milena Hering, Mircea Musta\c{t}\u{a}, and Sam Payne.
\newblock Positivity properties of toric vector bundles.
\newblock {\em Ann. Inst. Fourier (Grenoble)}, 60(2):607--640, 2010.

\bibitem[Laz04]{MR2095471}
Robert Lazarsfeld.
\newblock {\em Positivity in algebraic geometry. {I}}, volume~48 of {\em
  Ergebnisse der Mathematik und ihrer Grenzgebiete. 3. Folge. A Series of
  Modern Surveys in Mathematics}.
\newblock Springer-Verlag, Berlin, 2004.
\newblock Classical setting: line bundles and linear series.

\bibitem[LZ18]{MR3803780}
Yuchen Liu and Ziquan Zhuang.
\newblock Characterization of projective spaces by {S}eshadri constants.
\newblock {\em Math. Z.}, 289(1-2):25--38, 2018.

\bibitem[Pay06]{Pay2}
Sam Payne.
\newblock Stable base loci, movable curves, and small modifications, for toric
  varieties.
\newblock {\em Math. Z.}, 253(2):421--431, 2006.

\end{thebibliography}
\end{document}